\newtheorem{thr}{Theorem}
\newtheorem{lem}[thr]{Lemma}
\newtheorem{cor}[thr]{Corollary}
\newtheorem{pr}[thr]{Proposition}
\theoremstyle{definition}
\newtheorem{ex}[thr]{Example}
\theoremstyle{remark}
\def\F{\mathcal{F}}
\def\H{\mathcal{H}}
\def\M{\mathcal{M}}
\def\T{\mathcal{T}}
\def\Q{\mathbb{Q}}
\def\R{\mathbb{R}}
\def\Rc{\mathcal{R}}
\def\fa{\operatorname{fact}_+}
\def\rp{\operatorname{rk}_+}
\begin{document}

\title[A universality theorem for NMF]{A universality theorem for \\ nonnegative matrix factorizations}

\author{Yaroslav Shitov}
\email{yaroslav-shitov@yandex.ru}

\subjclass[2000]{15A23, 14P10}
\keywords{Matrix factorization, NMF, semialgebraic set, universality theorem}

\begin{abstract}
Let $A$ be a matrix with nonnegative real entries. A nonnegative factorization of size $k$ is a representation of $A$ as a sum of $k$ nonnegative rank-one matrices. The space of all such factorizations is a bounded semialgebraic set, and we prove that spaces arising in this way are universal. More presicely, we show that every bounded semialgebraic set $U$ is rationally equivalent to the set of nonnegative size-$k$ factorizations of some matrix $A$ up to a permutation of matrices in the factorization. We prove that, if $U\subset\mathbb{R}^n$ is given as the zero locus of a polynomial with coefficients in $\mathbb{Q}$, then such a pair $(A,k)$ can be computed in polynomial time. This result gives a complete description of the algorithmic complexity of nonnegative rank, and it also allows one to solve the problem of Cohen and Rothblum on nonnegative factorizations restricted to matrices over different subfields of $\R$.
\end{abstract}

\maketitle

\section{Introduction}

Let $A$ be a matrix with nonnegative real entries, and let $k$ be an integer number. The \textit{nonnegative matrix factorization}, or \textit{NMF}, is the task to approximate (or express) $A$ with a sum of $k$ rank-one matrices each of which has nonnegative entries. Both the general version of NMF and its exact counterpart are important tools in modern pure and applied mathematics. This problem is inherent in clustering problems and data mining~\cite{DHS}, and its outputs can be easier to interpret than those obtained from other factorization techniques~\cite{Gillis}. This feature leads to many real-world applications of the NMF, which include image processing~\cite{LS}, text mining~\cite{textmin}, music analysis~\cite{musan}, and audio signal processing~\cite{GNHS}. Another notable application of NMF is a parts-based approach to representation of objects, which leads to progress in face recognition~\cite{LS}. Also, the NMF problem does naturally arise in statistics and helps in studying the expectation maximization algorithms~\cite{KRS}, describes the spaces of explanations~\cite{MSS}, and one of its approximate versions turns out to be equivalent to a popular document clustering method known as the probabilistic latent semantic analysis~\cite{DLP}. A geometric approach allows one to reformulate NMF as a question on the extension complexity of a polytope and as a nested polytope problem, which leads to applications in combinatorial optimization~\cite{Yan}, theory of computation~\cite{FMPTdW}, statistics~\cite{KRS}, quantum mechanics~\cite{CR} and other fields.

\smallskip

As we see, the need of solving a particular instance of NMF can arise in a variety of applications, and this fact motivates the following problem.

\begin{pr}\label{prmain}
What is the computational complexity of NMF?
\end{pr}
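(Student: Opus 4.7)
The plan is to answer Problem~\ref{prmain} by establishing a universality theorem for NMF: every bounded semialgebraic set $U$ can be realized, up to rational equivalence, as the space of nonnegative size-$k$ factorizations of some matrix $A$ modulo permutation of the rank-one summands, and the pair $(A,k)$ can be constructed in polynomial time from a rational defining polynomial of $U$. Once proved, this pins down the complexity of exact NMF as complete for the existential theory of the reals $\exists\R$, since deciding emptiness of a semialgebraic set reduces to asking whether a given factorization space is non-empty, while membership in $\exists\R$ is a standard fact for NMF.

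First I would translate NMF into the geometric language of slack matrices and nested polytopes. A size-$k$ nonnegative factorization $A=UV$ is equivalent to a configuration of $k$ points (or of $k$ supporting hyperplanes) nested between two prescribed polytopes, so factorization spaces become locally closed semialgebraic subsets of products of simplices cut out by containment relations. This reduces the universality question to engineering nested-polytope configurations whose moduli match the prescribed $U$.

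Second, in the spirit of Mn\"ev-type universality theorems, I would construct arithmetic gadgets: small nested-polytope instances whose one-dimensional moduli parametrize a free real variable, together with gadgets that implement addition and multiplication via von Staudt-style cross-ratio constructions. Gluing these gadgets along shared coordinates, one assembles a matrix $A$ whose factorization space, stratified by combinatorial type, carries a distinguished stratum in rational bijection with $U$. Boundedness of $U$ is exploited to rescale every gadget into a common bounded affine chart so that all entries remain nonnegative. Tracking the bit complexity of each gadget and of the gluing yields the polynomial-time claim.

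The main obstacle will be the $S_k$-quotient. The raw factorization space carries both continuous moduli (the gadget parameters) and the combinatorial choice of which rank-one summand plays which role, and a naive construction produces many spurious strata that destroy the sought rational equivalence. Overcoming this requires building \emph{rigid markers}, that is, auxiliary rows and columns whose entries force a unique combinatorial type in any admissible factorization, thereby killing the $S_k$-action up to the identity and isolating the intended stratum. Combining this rigidity argument with an explicit polynomial bound on the number of gadgets in terms of the bit length of the defining polynomial of $U$ will deliver both the universality theorem and the polynomial-time construction simultaneously, and hence the complete complexity classification sought by Problem~\ref{prmain}.
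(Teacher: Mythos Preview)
Your plan reaches the right destination (universality $\Rightarrow$ $\exists\R$-completeness, hence a full answer to Problem~\ref{prmain}), but the route you sketch is genuinely different from the one the paper takes. You propose to pass through the Yannakakis nested-polytope picture and then run a Mn\"ev-style von~Staudt construction inside that geometry, handling the $S_k$-quotient by engineered rigid markers. The paper never leaves the matrix world: its central device is an \emph{incomplete matrix} formalism together with a concrete ``variable gadget'' (Theorem~\ref{pr3}, imported from~\cite{myCR}) that, applied to a designated entry, enlarges the matrix by a fixed $4\times 4$ block and provably turns that entry into a free parameter while shifting the target rank by~$4$. Iterating this gadget (Theorems~\ref{lem1} and~\ref{thr1}) lets one replace any collection of entries by independent bounded variables; the arithmetic is then encoded by two tiny explicit incomplete matrices (Lemmas~\ref{lemsum} and~\ref{lemprod}) whose unique small-rank factorizations force $L=\sum s_iy_i$ and $v=u_1u_2$, and the final $\H$ is just a block-diagonal assembly of these.

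What each approach buys: the paper's method is essentially self-contained and yields the polynomial-time bound for free, since every gadget is a fixed-size template and the number of applications is linear in the encoding of~$f$. It also dispatches the $S_k$ issue almost invisibly---the lexicographic normalization in the definition of $\fa$ plus the rigid $4\times 4$ blocks in each gadget pin down the combinatorial type without a separate rigidity argument. Your geometric route is conceptually natural and would connect to the broader Mn\"ev literature, but the steps you flag as ``main obstacle'' (killing spurious strata, controlling the $S_k$-action, and tracking bit complexity through von~Staudt gluings in a nonnegative chart) are exactly where the real work lies, and you have not yet indicated concrete constructions for any of them. In particular, von~Staudt gadgets live most naturally in projective configurations with signed coordinates; porting them into a nested-polytope instance while keeping all slack entries nonnegative and the factorization combinatorially unique is nontrivial and is precisely what the paper's variable gadget is designed to finesse.
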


This problem survived an extensive discussion during the last three decades. The NP-hardness of NMF is relatively easy to prove, and such a result has been known (at least) since 1993, see Lemma 3.3 in~\cite{JR}. However, the NMF techniques have not yet gained much of their popularity by the time the paper~\cite{JR} appeared, and this paper seems to remain unnoticed by many authors working on the NMF problem (see also a discussion in~\cite{myCR}). A standard reference to the NP-hardness of NMF is an ingenious construction of Vavasis~\cite{Vavas}, who actually proves a stronger result that NMF remains NP-hard even when the factorization rank equals the conventional rank of a given matrix. However, best known theoretical algorithms for NMF are still based on quantifier elimination techniques~\cite{CR}, and the true complexity of NMF remained unknown. In particular, several widely known references mention the result of Vavasis as an \textit{NP-completeness} proof~\cite{Moitra2, BRRT, ConBook}---in contrary to the fact that his paper explicitly states this as an open problem. The paper~\cite{Moitra2} makes further progress on Problem~\ref{prmain} and gives an NMF algorithm that halts in polynomial time for every fixed factorization rank $k$.

\smallskip

In this paper, we prove the \textit{universality theorem} for NMF, which states that the space of nonnegative factorizations of a given nonnegative matrix can be arbitrarily complicated (see Section~2 for a precise statement). In other words, our result is the NMF analogue of the famous theorem of Mn\"{e}v~\cite{Mnev} on oriented matroids and many universality results on polytopes~\cite{RGZ}, Nash equilibria~\cite{Datta}, positive semidefinite factorizations~\cite{mypsd}, tensor rank decompositions~\cite{mytens, SS}, and other objects in algebra and geometry~\cite{AAM, Mato}. As a corollary of our result, one deduces answers to several widely known problems on nonnegative matrices; in partucular, we get a complete solution of Problem~\ref{prmain}. The \textit{Existential theory of the reals (ETR)} is the following task: Given a given system of equations and inequalities with rational coefficients, does it have a real solution? Since NMF is a particular instance of ETR, it can in the worst case only be as hard as ETR; we show that this is indeed the case.

\begin{thr}\label{corof1}
NMF is polynomial-time equivalent to ETR.
\end{thr}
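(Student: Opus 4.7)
The plan is to prove the two inclusions separately: the reduction $\text{NMF} \leq_P \text{ETR}$ is essentially definitional, while the reverse reduction $\text{ETR} \leq_P \text{NMF}$ is the substantive direction and is exactly what the universality theorem was designed to yield.

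For $\text{NMF} \leq_P \text{ETR}$, given nonnegative rational $A$ of size $m\times n$ and an integer $k$, I would write the existence of a size-$k$ factorization $A=\sum_{i=1}^k u_iv_i^T$ directly as a system of $mn$ polynomial equations $(uv^T)_{pq}=A_{pq}$ plus nonnegativity inequalities on the $k(m+n)$ entries of the $u_i$ and $v_i$. The encoding has size polynomial in that of $(A,k)$, so satisfiability of this ETR instance is equivalent to solvability of the NMF instance.

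For the main direction $\text{ETR} \leq_P \text{NMF}$, I would proceed in three steps. First, I would reduce a general ETR instance to an ETR instance whose solution set $U\subset\R^n$ is \emph{bounded}; this is a well-known preprocessing step, e.g.\ by intersecting with a ball of radius $2^{2^{\operatorname{poly}(N)}}$ (which does not affect satisfiability thanks to standard witness-size bounds for semialgebraic sets) and then rescaling. Second, I would massage this bounded instance into the zero locus of a single polynomial with rational coefficients: replace a system $\{f_j=0,\ g_\ell\geq 0\}$ by $\sum_j f_j^2+\sum_\ell (g_\ell-s_\ell^2)^2=0$ with slack variables $s_\ell$, and, if needed, add $\sum_i x_i^2+\sum_\ell s_\ell^2\leq R^2$ and encode it the same way to retain boundedness. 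Third, I would feed this polynomial to the polynomial-time algorithm supplied by the universality theorem stated in the abstract; the output is a nonnegative matrix $A$ and an integer $k$ such that the space of size-$k$ nonnegative factorizations of $A$ (modulo permuting the rank-one summands) is rationally equivalent to $U$. Since rational equivalence between semialgebraic sets preserves non-emptiness, the original ETR instance is satisfiable if and only if $A$ admits a size-$k$ nonnegative factorization.

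The main obstacle in this proof is not visible at this level: all the genuine difficulty has been packaged into the universality theorem itself, and what remains here is bookkeeping. The only delicate point is making sure that the two preprocessing reductions (to bounded sets and to a single equation) can be arranged to match the precise hypothesis of the universality theorem --- in particular that the resulting polynomial still has rational coefficients and polynomial bit-length, and that \emph{boundedness} is preserved throughout. Once those sanity checks are in place, Theorem~\ref{corof1} follows by composition of polynomial-time reductions.
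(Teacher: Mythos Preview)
Your approach is essentially the paper's: the direction $\text{NMF}\leq_P\text{ETR}$ is immediate, and $\text{ETR}\leq_P\text{NMF}$ goes by preprocessing an arbitrary ETR instance into the hypothesis of the Universality Theorem and then invoking it. The paper is terser and outsources the entire preprocessing step to Theorem~3.3 of~\cite{mypsd}, which asserts that ETR remains $\exists\R$-complete when restricted to a single polynomial equation having no solution outside $[0,1]^n$.

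The one place your sketch is not merely a rewording: the specific preprocessing you describe --- intersect with a ball of radius $2^{2^{\operatorname{poly}(N)}}$ and rescale into the unit cube --- does not, as written, yield a polynomial of polynomial bit-length. Substituting $x_i\mapsto R y_i$ multiplies each coefficient by a power of $R$, and with $R$ doubly exponential even the binary representation of the new coefficients is exponential in the input size; the Universality Theorem's polynomial-time claim moreover assumes \emph{unary} encoding of coefficients and exponents (see the footnote to its statement), so this is fatal rather than cosmetic. The ``sanity check'' you flag is therefore not a formality: producing a bounded single-equation instance with small coefficients in polynomial time requires the standard repeated-squaring-via-auxiliary-variables trick (or an equivalent device), and packaging that correctly is precisely the content of the cited Theorem~3.3 in~\cite{mypsd}. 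Once you invoke that black box instead of the naive rescaling, your argument and the paper's coincide.
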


In other words, NMF belongs to the class of problems that are known in the computational complexity theory as $\exists\mathbb{R}$-complete, see~\cite{AAM, Mato, mypsd}. In the following section, we will formulate our universality theorem and deduce from it Theorem~\ref{corof1} and several other important results, including the solution of the Cohen--Rothblum problem on the behavior of NMF with respect to different ordered fields. As a final remark of this introduction, we note that the situation with the complexity of NMF is similar to the one we have for so-called \textit{positive semidefinite} matrix factorizations---these problems are $\exists\mathbb{R}$-complete in general~\cite{mypsd}, become polynomial time solvable when restricted to bounded factorization rank~\cite{Moitra2, mysmallpsd}, and have unknown complexity for matrices whose conventional rank is bounded~\cite{GG}.

\section{The Univerality theorem and its consequences}

Let us switch to a more general setting and consider arbitrary ordered field $\F$ and real closed field $\Rc$ containing $\F$. The symbols $\F,\Rc$ have this meaning throughout the paper, and we note a particularly important case of $(\F,\Rc)=(\Q,\R)$. We denote by $\F_+$ the set of all nonnegative elements of $\F$, and we call a matrix $A$ over $\F$ \text{nonnegative} if its entries belong to $\F_+$. A nonnegative $k$-factorization of such a matrix is a family $(A_1,\ldots,A_k)$ of rank-one matrices with entries in $\Rc_+$ such that $A_1+\ldots+A_k=A$; any permutation of this family obviously remains a valid factorization. However, we do not want to think of such factorizations as different, and we define $\fa(A,k)$ as the set of all nonnegative $k$-factorizations of $A$ satisfying an additional assumption that $A_1\succ\ldots\succ A_k$, where $\succ$ denotes the lexicographic ordering on the set of matrices. The set $\fa(A,k)$ is a bounded semi-algebraic subset of $\Rc^{mnk}$, where $m$ and $n$ are the dimensions of the matrix $A$. That is, this set can be defined by a quantifier-free formula over $\F$, and the entries of matrices in the factorizations are bounded (for instance, by the maximal entry of $A$). The \textit{nonnegative rank} of $A$ is the smallest $k$ such that $\fa(A,k)$ is non-empty; we denote this quantity by $\rp(A)$.

Now we need to introduce a variation of the concept of rational equivalence of sets (see e.g.~\cite{Mnev}), which we need to formulate our Universality theorem. Let $U\subset\Rc^n$ and $V\subset\Rc^m$ be a pair of sets satisfying $\pi(V)=U$, where $\pi:\Rc^m\to\Rc^n$ is the natural projection onto the first $n(\leqslant m)$ coordinates. If there are functions $\varphi_1,\ldots,\varphi_m$ rational over $\F$ such that, for all $u=(u_1,\ldots,u_n)\in U$, the preimage $\pi^{-1}(u)$ is unique and equal to
$$\left(u_1,\ldots,u_n,\varphi_{n+1}(u),\ldots,\varphi_m(u)\right),$$
then such a $\pi$ is called a \textit{rational projection} between $U$ and $V$. 
A \textit{permutation mapping} from $\Rc^n$ to $\Rc^n$ is a mapping that is defined, for any permutation $\sigma$ on $n$ elements, as $(a_1,\ldots,a_n)\to\left(a_{\sigma(1)},\ldots,a_{\sigma(n)}\right)$.
We say that arbitrary sets $U_1\subset\Rc^n$ and $U_2\subset\Rc^m$ are \textit{strongly equivalent} if $(U_1,U_2)$ belongs to the equivalence relation generated by rational projections and permutation mappings. As we can see, any pair of strongly equivalent sets are rationally equivalent as well. Now we are ready to formulate the main result of our paper.


\medskip

\textbf{The Universality Theorem for Nonnegative Factorizations.}

Let $F$ be the zero locus of a polynomial $f\in\F[x_1,\ldots,x_n]$.

There are a matrix $M$ over $\mathcal{F}_+$ and an integer $k$ such that $\fa(M,k)$ is strongly equivalent to $F\cap[0,1]^n$. If $\mathcal{F}=\mathbb{Q}$, then one can find $M$ and $k$ in polynomial time\footnote{When we discuss the computational complexity, we work under quite a strong assumption that $f$ is encoded as a list of monomials with coefficients and exponents written in the unary.}.

\medskip

We proceed with applications of this theorem; one of these is a solution of a well known problem of Cohen and Rothblum~\cite{CR}. For $A$ a nonnegative matrix over $\F$, we define $\rp(A,\F)$ as the smallest $k$ such that $\fa(A,k)$ contains a point with coordinates in $\F$; this quantity is called the nonnegative rank of $A$ \textit{with respect to} $\F$. Cohen and Rothblum asked, how sensitive is the nonnegative rank to the field with respect to which it is computed? The universality theorem can be thought of as a complete answer to this question; the following corollary generalizes previously known partial solutions in~\cite{anotherproof, mydep, myCR}. (We note in passing that~\cite{mydep} gives some progress on the case of factorization rank equal to conventional rank, for which the universality result is yet to be discovered.)

\begin{cor}\label{corcohrot}
Let $\F$ be an ordered field and $\Rc$ its real closure. If $\F_1,\F_2$ are different fields satisfying $\F\subset\F_1,\F_2\subset\Rc$, then there is a matrix $A$ over $F$ such that $\operatorname{Rank}_+(A,\F_1)\neq\operatorname{Rank}_+(A,\F_2)$.
\end{cor}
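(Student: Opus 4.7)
My plan is to distinguish $\F_1$ and $\F_2$ via nonnegative rank by first distinguishing them via zeros of a polynomial in $[0,1]^n$, and then applying the Universality Theorem. Since $\F_1 \neq \F_2$, I may (after possibly swapping) pick $\alpha \in \F_1 \setminus \F_2$. Because $\Rc$ is the real closure of $\F$, every element of $\Rc$ is algebraic over $\F$; in particular $\alpha$ has a minimal polynomial $g \in \F[x]$ with finitely many real roots in $\Rc$. Using the order-density of $\F$ in $\Rc$ together with standard root-separation bounds, I choose $a < b$ in $\F$ isolating $\alpha$ from the other real roots of $g$, and set $\tilde g(x) = g((b-a)x + a)$. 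The zero locus of $\tilde g$ in $[0,1]$ is then the single point $\gamma = (\alpha - a)/(b - a)$, which lies in $\F_1 \setminus \F_2$ because the change of variables is $\F$-affine and invertible.

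Next I apply the Universality Theorem to $\tilde g$ (viewed as a polynomial in $n=1$ variable) to obtain a nonnegative matrix $M$ over $\F$ and an integer $k$ with $\fa(M, k)$ strongly equivalent to $\{\gamma\}$. Rational projections are built from $\F$-rational functions, so they send $\F_i$-rational points to $\F_i$-rational points in both directions; permutations preserve coordinate fields trivially. Hence strong equivalence transports the property ``contains a point with $\F_i$-coordinates'', so $\fa(M, k)$ has an $\F_1$-point but no $\F_2$-point, giving $\rp(M, \F_1) \leqslant k$. To get $\rp(M, \F_2) > k$ I use a splitting trick: any $\F_2$-rational $k'$-factorization with $k' < k$ can be enlarged to an $\F_2$-rational $(k'+1)$-factorization by replacing one rank-one summand $A_i$ by the pair $\lambda A_i,\ (1-\lambda)A_i$ for a generic $\lambda \in (0,1) \cap \Q$ (only finitely many values of $\lambda$ are forbidden by the lexicographic tiebreak in the definition of $\fa$). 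Iterating would yield an $\F_2$-point of $\fa(M, k)$, a contradiction. Thus $\rp(M, \F_1) \leqslant k < \rp(M, \F_2)$.

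The main obstacle is the construction of the distinguishing polynomial: one must isolate $\alpha$ inside $[0,1]$ from the other real roots of $g$ using only elements of $\F$ as interval endpoints. For $\F = \Q$ this is immediate since $\Q$ is dense in $\R$; in the general ordered-field setting the argument rests on the order-density of $\F$ in its real closure, which follows from the fact that the minimal polynomial of any $\alpha \in \Rc$ must change sign at $\alpha$. All other steps---the strong-equivalence transfer and the splitting monotonicity---are routine once the polynomial is in place.
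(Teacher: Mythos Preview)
Your approach matches the paper's: choose $\alpha$ lying in exactly one of $\F_1,\F_2$, use an $\F$-affine substitution so that its minimal polynomial has $\alpha$ as the sole root in $[0,1]$, and invoke the Universality Theorem with $n=1$. You also supply detail the paper omits---notably the splitting argument that an $\F_2$-rational factorization of size $<k$ can be padded to one of size $k$, which is what is needed to pass from ``$\fa(M,k)$ has no $\F_2$-point'' to $\rp(M,\F_2)>k$.

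One caution on the isolation step: the claim that $\F$ is order-dense in its real closure is \emph{false} for non-archimedean $\F$. For instance, take $\F=\R(t)$ ordered with $t$ infinite; no element of $\R(t)$ lies between $\sqrt t$ and $\sqrt t+1$, since a rational function in $t$ has integral growth order while $\sqrt t$ has order $1/2$. Even the weaker statement you actually need---that two real roots of a single irreducible $f\in\F[x]$ can always be separated by an element of $\F$---fails in this setting: the polynomial $f(x)=x^4-2tx^2+t^2-t=(x^2-t)^2-t$ is irreducible over $\R(t)$, yet no element of $\R(t)$ lies between its positive roots $\sqrt{t-\sqrt t}$ and $\sqrt{t+\sqrt t}$ (a degree-parity count shows $(p^2-tq^2)^2>tq^4$ for all nonzero $p,q\in\R[t]$). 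The paper's phrase ``linear substitutions allow us to assume'' glosses over exactly this point. For $\F=\Q$, or more generally any archimedean ordered field, density holds and your argument is complete as written.
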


\begin{proof}
Let $\alpha$ be an element that lies in exactly one of the fields $\F_1,\F_2$. Linear substitutions allow us to assume that $\alpha$ belongs to the interval $(0,1)$, and also that this interval contains no other root of the $\F$-minimal polynomial $f$ of $\alpha$. Applying the Universality theorem with $n=1$ to this polynomial, we get a desired matrix.
\end{proof}

What is the complexity of computing the nonnegative rank with respect to different fields? This question was asked in~\cite{BR, CR} in the case of rational nonnegative rank. The Universality theorem shows that this problem is polynomial-time equivalent to the following question: Does a given bounded semialgebraic set contain a rational point? A well known conjecture posits that the solubility of rational \textit{Diophantine equations} is undecidable~\cite{KR, Poon}, so the current state of knowledge does not allow one to compute $\operatorname{Rank}_+(M,\mathbb{Q})$ algorithmically. Of course, there is nothing specific about the field $\mathbb{Q}$ in this consideration, and the above mentioned conclusion as to complexity of nonnegative rank is valid for any ordered field. In the case of the reals, an even more precise description is possible: Theorem~\ref{corof1} states that computing the real nonnegative rank is as hard as to determine if any, not necessarily bounded, semi-algebraic set is non-empty. Theorem~\ref{corof1} follows from the Universality theorem above and Theorem~3.3 in~\cite{mypsd}, which states that the ETR problem remains $\exists\mathbb{R}$-complete even if restricted to a single polynomial equation not having any solution outside the unit cube.

\smallskip

The remaining part of the paper (Sections~3--4) is the proof of the Universality theorem. In Section~\ref{seclocvar}, we introduce a gadget that allows us to replace some entries in matrices with unknowns, which leads to a problem richer than the conventional NMF. In Section~\ref{secfinproof}, we explain how to encode a polynomial in terms of the new problem and finalize our proof.

\section{The proof: A `variable gadget'}\label{seclocvar}

In this section, we explain the connection between the classical NMF problem and its generalization in which matrices are allowed to contain unknown entries. In other words, we consider so called \textit{incomplete matrices} which may contain not only elements of $\F_+$ but also variables whose ranges are segments in $\F_+$. Let $\mathcal{A}$ be such a matrix; a completion of $A$ is a matrix that can be obtained from $\mathcal{A}$ by assigning some value to every variable within its range. We define $\fa(\mathcal{A},k)$ as the union of all sets $\fa(A,k)$ over all completions $A$ of $\mathcal{A}$. The smallest $k$ for which $\fa(\mathcal{A},k)$ is non-empty is called the \textit{nonnegative rank} of an incomplete matrix $\mathcal{A}$. Let us illustrate these definitions with an example.

\begin{ex}\label{exex}
Consider the incomplete matrix $$\mathcal{A}=
\begin{pmatrix}
1&1&x_1\\
x_2&y&1\\
x_3&2&y
\end{pmatrix},$$
assuming that the variables range in $[0,2]$. This matrix is rank-one if $x_1=\sqrt{0.5}$, $x_2=y=\sqrt{2}$, $x_3=2$, so $\mathcal{A}$ has nonnegative rank one as an incomplete matrix. We also see that $\fa(\mathcal{A},1)$ is a singleton set corresponding to $(x_1,x_2,x_3,y)=\left(\sqrt{0.5},\sqrt{2},2,\sqrt{2}\right)$.
\end{ex}

This example already shows that the space of nonnegative factorization may be non-empty but consist of non-rational points only, even for incomplete matrices with rational known entries. Moreover, our first counterexample to the Cohen--Rothblum problem (see~\cite{myCR}) was essentially obtained by applying the gadget described below to the incomplete matrix as in Example~\ref{exex}. Let
$$A=\left(\begin{array}{ccc|ccc}
&&&&&\\
&\overline{A}&&&\textcolor{black}{V}&\\
&&&&&\\\hline
&\textcolor{black}{c}&&\textcolor{blue}{N}&\textcolor{blue}{\ldots}&\textcolor{blue}{N}\\
\end{array}\right)$$
be a nonnegative matrix. We say that we obtain the matrix 
$$G=\left(\begin{array}{ccc|ccc|cccc}
&&&&&&0&0&0&0\\
&\overline{A}&&&\textcolor{black}{V}&&\vdots&\vdots&\vdots&\vdots\\
&&&&&&0&0&0&0\\\hline
&\textcolor{black}{c}&&\textcolor{blue}{N}&\textcolor{blue}{\ldots}&\textcolor{blue}{N}&\textcolor{red}{M}&\textcolor{red}{M}&\textcolor{red}{M}&\textcolor{red}{M}\\\hline
0&\ldots&0&\textcolor{red}{M}&\textcolor{red}{\ldots}&\textcolor{red}{M}&\textcolor{red}{M}&\textcolor{red}{M}&\textcolor{red}{0}&\textcolor{red}{0}\\
0&\ldots&0&\textcolor{red}{0}&\textcolor{red}{\ldots}&\textcolor{red}{0}&\textcolor{red}{0}&\textcolor{red}{M}&\textcolor{red}{M}&\textcolor{red}{0}\\
0&\ldots&0&\textcolor{red}{0}&\textcolor{red}{\ldots}&\textcolor{red}{0}&\textcolor{red}{0}&\textcolor{red}{0}&\textcolor{red}{M}&\textcolor{red}{M}\\
0&\ldots&0&\textcolor{red}{0}&\textcolor{red}{\ldots}&\textcolor{red}{0}&\textcolor{red}{M}&\textcolor{red}{0}&\textcolor{red}{0}&\textcolor{red}{M}
\end{array}\right)
$$
by applying the \textit{variable gadget with parameter} $M$ to the blue entries of $A$. Let $\mathcal{A}$ be the matrix obtained from $A$ by replacing the blue entries with a variable $x$ ranging in $[\max\{0,N-M\},N]$. The following is essentially Theorem~2 in~\cite{myCR}.

\begin{thr}\label{pr3}
For $M,N\in\F_+$ we have

\noindent (1) if $r$ is an integer such that $r\leqslant \rp\left(\overline{A}\right)$, then $\fa(G,r+4)$ and $\fa(\mathcal{A},r)$ are strongly equivalent;

\noindent (2) if $\textcolor{black}{V}$ is a single column, then $\operatorname{Rank}_+(G,\F)=\operatorname{Rank}_+(\mathcal{A},\F)+4$. 
\end{thr}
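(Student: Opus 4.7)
The plan is to exploit the rigid zero-pattern of $G$ to decompose every nonnegative $(r+4)$-factorization, up to permutation of the factors, into $r$ \emph{ordinary} factors that give a nonnegative factorization of some completion of $\mathcal{A}$ and $4$ \emph{gadget} factors that are rationally determined by that completion. The zero block in the top rows against the last four columns, together with the zero block in the four gadget rows against the $\overline{A}$-columns, forces every rank-one summand $uv^T$ to be either ordinary (with $u$ vanishing on the gadget rows, so $v$ may be supported in the $\overline{A}$/$V$/$N$-columns) or gadget (with $u$ supported in the middle and gadget rows and $v$ in the last four or middle columns only). A rectangle-count on the bottom-right $5 \times 4$ block shows that at least $4$ gadget factors are needed, because no subrectangle of the $4$-cycle circulant support covers more than two of its eight nonzero entries; restricting the ordinary factors to the $\overline{A}$-block then gives a nonnegative factorization of $\overline{A}$ of size at most $r$, and this is where the hypothesis $r \leqslant \rp(\overline{A})$ enters to fix the split between the two sides.

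Next I would show that the four gadget factors are rigid up to one scalar of freedom. Of the two combinatorial choices for a nonnegative $4$-factorization of the $4 \times 4$ circulant, only the ``horizontal'' one (each factor supported on some gadget row $g_i$ against a cyclically adjacent pair among the last four columns) is compatible with the $M$-entries in the first gadget row's middle columns; the ``vertical'' one would leave those uncovered. The values on the last four columns are then uniquely determined, and the only remaining freedom is how much the factor through $g_1$ contributes to the middle row via the middle columns. Summing these contributions gives a single value $N-x$ added by the gadget to every blue entry of the middle row, where $x \in [\max\{0,N-M\},N]$ is precisely the value of the variable replacing the blue entries in the completion $\mathcal{A}(x)$ that the ordinary factors factorize. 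All gadget entries are rational functions of $x$, $N$, and $M$ over $\F$, yielding a rational projection $\fa(G,r+4) \to \fa(\mathcal{A},r)$ (forget the gadget) with a rational inverse (adjoin the uniquely determined gadget factors); after permutations restoring the lexicographic order, this is the strong equivalence of part~(1).

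For part~(2), assuming $V$ is a single column reduces $\mathcal{A}$ to a matrix with a single scalar unknown, so a rational nonnegative $r$-factorization of $\mathcal{A}$ exists exactly when both $x$ and the factorization of $\mathcal{A}(x)$ are over $\F$. Since the four gadget factors are rational functions of $x$ alone, such a factorization extends to a rational $(r+4)$-factorization of $G$; combined with the projection of part~(1) this yields $\rp(G,\F) = \rp(\mathcal{A},\F) + 4$. I expect the hardest step to be the rigidity analysis on the last four columns: one must show that the combinatorial type of the gadget part is forced, that exactly four gadget factors occur (ruling out ``creative'' factorizations in which an ordinary factor secretly subsumes part of the gadget), and that their numerical values on the last four columns are uniquely determined. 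The $4$-cycle circulant zero-pattern, together with the $M$-entries in the first gadget row's middle columns, is exactly what makes this rigidity available.
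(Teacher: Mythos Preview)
The paper does not give a self-contained proof of this theorem at all: it simply remarks that ``the following is essentially Theorem~2 in~\cite{myCR}'' and moves on. So there is no in-paper argument to compare against, and your proposal is really an attempt to reconstruct the proof of the cited result.

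For part~(1), your reconstruction is sound and captures the intended mechanism. One small imprecision: the dichotomy ``ordinary ($u$ vanishes on the gadget rows) vs.\ gadget ($u$ supported on the middle and gadget rows, $v$ on the $V$- and last four columns)'' is not literally exhaustive, since a rank-one summand could have $u$ supported on a top row \emph{and} on $g_1$, with $v$ supported on the $V$-columns only. The clean way to split is between the summands that contribute to the $\overline{A}$-block and those that contribute to the bottom-right $4\times4$ circulant; the zero pattern makes these two roles incompatible, the circulant forces at least four of the latter, and the hypothesis $r\leqslant\rp(\overline{A})$ then pins the split at exactly $(r,4)$. After that, your horizontal/vertical rigidity analysis (with the vertical option excluded because the $g_1$-row entries in the $V$-columns would otherwise go uncovered) and the computation of the single free parameter $t_1=(N-x)/M$ are correct.

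For part~(2), your appeal to part~(1) does not go through as written. Part~(1) assumes $r\leqslant\rp(\overline{A})$, but the relevant value here is $r=\rp(\mathcal{A},\F)\geqslant\rp(\overline{A})$, so the hypothesis is in general violated. The inequality $\rp(G,\F)\leqslant\rp(\mathcal{A},\F)+4$ is indeed immediate from your explicit gadget factors, but the reverse inequality needs a direct argument: starting from an arbitrary $\F$-factorization of $G$ of size $s$, one still has $\geqslant4$ summands hitting the circulant, and one must check that what remains (after subtracting their contribution to the middle row's $V$-entry) is a nonnegative $\F$-factorization of a \emph{bona fide} completion of $\mathcal{A}$. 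This is exactly where the hypothesis that $V$ is a single column is used---with several $V$-columns the leftover values $N-\gamma_j$ need not coincide, so one would not get a completion of $\mathcal{A}$. You should make this step explicit rather than invoking part~(1).
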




The above theorem allows us to encode incomplete matrices in which the use of every variable is limited to a single row or column; now we are going to construct a more general gadget. Let $B$ be an incomplete matrix whose entries are elements of $\F_+$ and variables $x,x_1,\ldots,x_n$. We denote the set of entries equal to $x$ by $\T=\{(i_1,j_1),\ldots,(i_\tau,j_\tau)\}$, and we assume that the sequences $i_1,\ldots,i_\tau$ and $j_1,\ldots,j_\tau$ do not contain repeating indexes. Assume $M,N,P,Q\in\F$ are such that $N\geqslant Q>P>0$ and $M\geqslant1/P$. We construct the matrix 
$$\left(\begin{array}{cccc|ccc|ccc|ccc}
\textcolor{red}{N}&\textcolor{red}{N}&\textcolor{red}{0}&\textcolor{red}{0}&&&&&&&&&\\
\textcolor{red}{0}&\textcolor{red}{N}&\textcolor{red}{N}&\textcolor{red}{0}&&O&&&O&&&O&\\
\textcolor{red}{0}&\textcolor{red}{0}&\textcolor{red}{N}&\textcolor{red}{N}&&&&&&&&&\\\hline
\textcolor{red}{N}&\textcolor{red}{0}&\textcolor{red}{0}&\textcolor{red}{N}&\textcolor{red}{N}&\textcolor{red}{\ldots}&\textcolor{red}{N}&0&\ldots&0&0&\ldots&0\\\hline
\textcolor{red}{N}&\textcolor{red}{N}&\textcolor{red}{N}&\textcolor{red}{N}&\textcolor{green}{N}&\textcolor{green}{\ldots}&\textcolor{green}{N}&1&\ldots&1&0&\ldots&0\\\hline
&&&&&&&&&&&&\\
&O&&&&\textcolor{blue}{QI_\tau}&&&I_\tau&&&O&\\
&&&&&&&&&&&&\\\hline
&&&&&&&&&&&&\\
&O&&&&I_\tau&&&\textcolor{magenta}{B_\T}&&&B_2&\\
&&&&&&&&&&&&\\\hline
&&&&&&&&&&&&\\
&O&&&&O&&&B_3&&&B_4&\\
&&&&&&&&&&&&
\end{array}\right)
$$
and denote it by $\Gamma_1(B,x,M,N,P,Q)$. Here, the rows and columns of $\textcolor{magenta}{B_\T}$ have labels $i_1,\ldots,i_\tau$ and $j_1,\ldots,j_\tau$, respectively, and the matrix $\left(\begin{smallmatrix}\textcolor{magenta}{B_\T}&B_2\\B_3&B_4\end{smallmatrix}\right)$ is obtained from $B$ by replacing every entry equal to $x$ by $M$. (Therefore, the diagonal entries of $\textcolor{magenta}{B_\T}$ become equal to $M$, and the matrix $\Gamma_1$ does not anymore depend on $x$.) 
We denote by $\Gamma(B,x,M,N,P,Q)$ the matrix obtained from $\Gamma_1$ by applying the variable gadgets with parameter $Q-P$ to every diagonal entry of the blue block $\textcolor{blue}{QI_\tau}$.

\begin{thr}\label{lem1}
Let $\rp(B_4)\geqslant\rho$ and $\rho'=\rho+5\tau+4$. Then $\fa(\Gamma,\rho')$ and $\fa(B,\rho)$ are strongly equivalent, provided that $x\in[M-1/P,M-1/Q]$.
\end{thr}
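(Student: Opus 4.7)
The plan is to reduce the theorem to an analogous statement for the incomplete matrix version of $\Gamma_1$ and then peel off $\tau$ applications of the already-established variable gadget. Let $\mathcal{G}_1$ denote the incomplete matrix obtained from $\Gamma_1$ by replacing each of the $\tau$ diagonal entries of the blue block $QI_\tau$ with a fresh variable ranging over $[P,Q]$. Since each such entry is confined to a single row and a single column, Theorem~\ref{pr3}(1) applies $\tau$ times in succession and shows that $\fa(\Gamma,\rho+5\tau+4)$ is strongly equivalent to $\fa(\mathcal{G}_1,\rho+\tau+4)$. It therefore suffices to prove that $\fa(\mathcal{G}_1,\rho+\tau+4)$ is strongly equivalent to $\fa(B,\rho)$ under the restriction $x\in[M-1/P,M-1/Q]$.

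For the upper bound, I would start from a factorization $(A_1,\ldots,A_\rho)\in\fa(B,\rho)$ in which $x$ takes a specific value $x_0\in[M-1/P,M-1/Q]$ and construct a factorization of $\mathcal{G}_1$ by adjoining $\tau+4$ explicit rank-one terms. The first four form a ``cycle bundle'' covering the top-left $4\times 4$ block, extended through rows~4 and~5 so as to account for the $N$-entries in the $QI_\tau$ columns. The remaining $\tau$ are ``coupling'' terms $u_kv_k^T$, one for each $k$, supported on the $k$-th row and column of the $QI_\tau/I_\tau$ and $I_\tau/B_\T$ blocks and taking value $t_k:=1/(M-x_0)\in[P,Q]$ at $(QI_\tau)_{kk}$, value $1$ at both $I_\tau$-entries, and value $1/t_k=M-x_0$ at the $(i_k,j_k)$-diagonal of $B_\T$. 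The original $A_i$'s, embedded in the $B$-rows and columns with $x$ replaced by $x_0$, cover the remaining entries and contribute exactly $x_0$ at each $(i_k,j_k)$, which adds to the coupling term to reach the required value $M$.

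For the lower bound and rigidity, I would show that any factorization of $\mathcal{G}_1$ with at most $\rho+\tau+4$ rank-one terms has precisely this shape. The $4\times 4$ cycle subblock of nonnegative rank $4$ consumes at least four terms that cannot reach the $B$-columns; this separation is enforced by rows~4 and~5, which carry $N$-entries in the $QI_\tau$ columns and $1$-entries in the $B_\T$ columns and thereby make any ``spanning'' rank-one term too expensive. For each $k$, the four entries $(QI_\tau)_{kk}$, both $(I_\tau)_{kk}$, and $(B_\T)_{i_k,j_k}$ impose, via the standard rank-one identity $(uv^T)_{ii}(uv^T)_{jj}=(uv^T)_{ij}(uv^T)_{ji}$ applied to the forced coupling term, that this term contributes exactly $1/t_k$ at the $B_\T$-diagonal. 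Hence the remaining $\rho$ terms realize $B$ with $x=M-1/t_k$ at each position $(i_k,j_k)$; since $x$ is a single shared variable, all $t_k$ coincide, and $x=M-1/t\in[M-1/P,M-1/Q]$. The extraction of the core terms is then a rational projection, inverted by the upper-bound construction.

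The main obstacle is the rigidity step, specifically the proof that no rank-one term can be ``shared'' between the cycle block, the coupling block, and the core $B$-block in a way that saves a term. Rows~4 and~5 are engineered precisely to rule out such hybrids, so carefully analyzing the combinatorial support of each rank-one matrix in a minimum factorization, and tracking how the $N$ and $1$ entries in those two rows force each term into exactly one role, will be the technical bulk of the proof.
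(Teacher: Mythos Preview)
Your reduction via $\tau$ applications of Theorem~\ref{pr3} is correct, but you stop one application too early. The first four rows and columns of $\Gamma_1$, together with the fifth row carrying the green $N$'s, are themselves an instance of the variable gadget (with parameter $N$) applied to the green entries. The paper exploits this: it removes those four rows and columns and turns the green entries into a single fresh variable $c\in[0,N]$, obtaining an incomplete matrix $\Gamma_2$ with $\fa(\Gamma,\rho')$ strongly equivalent to $\fa(\Gamma_2,\rho+\tau)$. In $\Gamma_2$ the first row reads $(c,\ldots,c\mid 1,\ldots,1\mid 0,\ldots,0)$, and a rank-one term touching the $k$th blue column is forced, by the $2\times2$ minor on rows $\{1,k+1\}$ and columns $\{k,\tau+k\}$, to satisfy $v_k=c$. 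This is the mechanism that synchronises the $\tau$ coupling values.

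Your direct analysis of $\mathcal{G}_1$ at rank $\rho+\tau+4$ misses exactly this mechanism, and the sentence ``since $x$ is a single shared variable, all $t_k$ coincide'' is circular. In $\mathcal{G}_1$ the variable $x$ no longer appears (it was replaced by $M$ when forming $B_\T$), and the $v_k$'s are \emph{independent} variables in $[P,Q]$; nothing you have written prevents a factorization of $\mathcal{G}_1$ in which the $v_k$ differ, so that the residual $\rho$ terms produce different values $M-1/v_k$ at the positions $(i_k,j_k)$. Such a factorization would lie in $\fa(\mathcal{G}_1,\rho+\tau+4)$ but correspond to no point of $\fa(B,\rho)$, destroying the claimed strong equivalence. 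Rows~4 and~5 do rule this out, but not merely by blocking ``hybrid'' terms: their role is to make the green entries behave as a single gadget variable $c$ that every coupling term must match. Either recognise this as one more call to Theorem~\ref{pr3} (the paper's route) or reprove that synchronisation directly; as written, the equality of the $t_k$ is asserted rather than established.
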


\begin{proof}
Let $\Gamma_2$ be the matrix obtained from $\Gamma_1$ as follows. We remove the first four rows and the first four colums, we replace the green entries by the variable $\textcolor{green}{c\in[0,N]}$, we replace the diagonal entries of $\textcolor{blue}{QI_\tau}$ by different variables $\textcolor{blue}{v_i\in[P,Q]}$. It can be seen from Theorem~\ref{pr3} that $\fa(\Gamma,\rho')$ is strongly equivalent to $\fa(\Gamma_2,\rho+\tau)$.

Now let $(G_1,\ldots,G_{\rho+\tau})$ be a nonnegative factorization of $\Gamma_2$. We easily get $\textcolor{blue}{v_i}=\textcolor{green}{c}$, and if $G_j$ is a matrix with a non-zero blue entry, then it has to be equal to $\textcolor{green}{c}$, and we have $1/\textcolor{green}{c}$ at the corresponding entry of $G_j$ located on the diagonal of the \textcolor{magenta}{magenta} block. So we can see that $\fa(\Gamma_2,\rho+\tau)$ is strongly equivalent to $\fa(B,\rho)$, since $1/\textcolor{green}{c}=1/\textcolor{blue}{v_i}$ ranges in $[1/P,1/Q]$.
\end{proof}

The main result of this section is a natural generalization of Proposition~\ref{lem1}.

\begin{thr}\label{thr1}
Let $\H$ be an incomplete matrix whose entries are elements of $\F_+$ and variables $x_1,\ldots,x_n$. Assume that $x_i$ ranges in $[a_i,b_i]\subset\F_+$ and occurs at most once in every row and column. 
Let $H_0$ be the matrix obtained from $\H$ by removing all the rows and columns in which the variables occur, and suppose $\rp(H_0)=\rho$.

Then there is a complete matrix $\M=\M(\H)$ for which the sets $\fa(\M,\rho+4n+5\tau_1+\ldots+5\tau_n)$ and $\fa(\H,\rho)$ are strongly equivalent, where $\tau_i$ is the number of times that $x_i$ occurs. If $\F=\mathbb{Q}$, then we can find such an $\M$ in time polynomial in the size of the input.
\end{thr}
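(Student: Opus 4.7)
The plan is to eliminate the variables $x_1,\ldots,x_n$ of $\H$ one at a time by iterated invocation of Proposition~\ref{lem1}, running the induction on the number $n$ of distinct variables. The base case $n=0$ is immediate, taking $\M := \H$; the required strong equivalence is the identity.

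For the inductive step, one would fix $x_n$ and choose rational parameters $M_n, N_n, P_n, Q_n \in \F$ satisfying the hypotheses of Proposition~\ref{lem1} and tailored so that $[M_n - 1/P_n,\, M_n - 1/Q_n] = [a_n, b_n]$; for instance $M_n := b_n+1$, $Q_n := N_n := 1$, $P_n := 1/(b_n-a_n+1)$ whenever $a_n < b_n$, and the degenerate case $a_n = b_n$ is handled by substituting the constant value and recursing on fewer variables. One then sets $\Gamma := \Gamma(\H, x_n, M_n, N_n, P_n, Q_n)$, which is incomplete with remaining variable set $\{x_1,\ldots,x_{n-1}\}$; each such variable still occurs at most once per row and column, because the rows and columns newly introduced by the gadget carry no $x_i$. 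Since $H_0$ sits as a submatrix inside the block $B_4$ obtained from $\H$ by removing the rows and columns of $x_n$, monotonicity of nonnegative rank under submatrices gives $\rp(B_4) \geq \rp(H_0) = \rho$, and Proposition~\ref{lem1} then yields
\[
\fa(\Gamma,\; \rho + 5\tau_n + 4) \;\sim\; \fa(\H,\rho).
\]

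Applying the induction hypothesis to $\Gamma$ should then produce a complete matrix $\M$ with $\fa(\M,\, \rho' + 4(n-1) + 5(\tau_1+\cdots+\tau_{n-1})) \sim \fa(\Gamma,\rho')$ for $\rho' := \rp(\widetilde{H}_0)$, where $\widetilde{H}_0$ denotes the submatrix of $\Gamma$ obtained by removing the rows and columns of the remaining variables. Composing with the previous equivalence via transitivity of strong equivalence and tracking additive constants yields exactly $\rho + 4n + 5(\tau_1+\cdots+\tau_n)$ on the left and $\rho$ on the right, provided that $\rp(\widetilde{H}_0) = \rho + 5\tau_n + 4$. The polynomial-time execution over $\Q$ then follows from the fact that $\Gamma$ is written down explicitly at each step, with all entries of polynomially bounded bit-size, and only $O(\tau_i+1)$ new rows and columns added per inductive round.

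The hard part will be the rank identity $\rp(\widetilde{H}_0) = \rho + 5\tau_n + 4$: this requires recognizing $\widetilde{H}_0$ as the same scaffold of $\Gamma$ built over $H_0$ (with the rows and columns that carried $x_n$ preserved, and $x_n$ already replaced by the constant $M_n$), and then re-running the rank-counting argument from the proof of Proposition~\ref{lem1} to pin down that the red, green, and blue blocks together with the variable gadgets on the blue diagonal contribute exactly the promised increment $5\tau_n + 4$ independently of which other variables $x_1,\ldots,x_{n-1}$ still populate $\Gamma$. Once this increment identity is in hand, the rest of the proof is a transparent chain of strong equivalences plus routine parameter bookkeeping.
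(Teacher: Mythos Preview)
Your proposal is precisely the paper's approach: the paper's entire proof reads ``Repeatedly apply Theorem~\ref{lem1} to eliminate variables $x_1,\ldots,x_n$,'' and you have unpacked this into an explicit induction with the correct bookkeeping. You in fact go further than the paper by isolating the rank identity $\rp(\widetilde H_0)=\rho+5\tau_n+4$ needed to re-invoke the hypothesis at each step---a point the paper leaves entirely implicit---and your sketch of how to obtain it (recognising $\widetilde H_0$ as the $\Gamma$-scaffold over a smaller base and rerunning the rank count from the proof of Proposition~\ref{lem1}) is the right idea.
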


\begin{proof}
Repeatedly apply Theorem~\ref{lem1} to eliminate variables $x_1,\ldots,x_n$.
\end{proof}

\section{The proof: Encoding a polynomial equation}\label{secfinproof}

Let us complete the proof of the Universality theorem using Theorem~\ref{thr1}. The following lemmas explain how to express the polynomial equation $f=0$ in terms of the factorization spaces of incomplete matrices. Namely, one of them encodes the equations involving a linear combination, and the other allows one to express the product of two variables.

\begin{lem}\label{lemsum}
Let $y_1,\ldots,y_l$ be variables ranging in $[0,1]$, let $s_1,\ldots,s_l,N\in\F_+$ satisfy $N\geqslant s_1+\ldots+s_l$, and let $L$ be a variable ranging in $[0,N]$. Let $S=S(s_1y_1+\ldots+s_ly_l=L)$ be the matrix obtained from
$$\left(\begin{array}{c|ccc}
y_1&&&\\
\vdots&&I_l&\\
y_l&&&\\\hline
L&s_1&\ldots&s_l\\\hline
\textcolor{blue}{1}&&&\\
\textcolor{blue}{\vdots}&&I_l&\\
\textcolor{blue}{1}&&&
\end{array}\right)
$$
by applying the variable gadgets with parameter $1$ to every single blue entry. Then

\noindent (1) if we remove all the rows and columns of $S$ which contain variables, we get a matrix with nonnegative rank $5l$;

\noindent (2) for any fixed assignment of values to variables $y_1,\ldots,y_l$, there is a unique nonnegative $5l$-factorization of $S$ corresponding to the value of $L$ equal to $s_1y_1+\ldots+s_ly_l$. Some entries of matrices in this factorization are equal to $y_1,\ldots,y_l$, and the rest are rational functions of these variables fixed in advance. 
\end{lem}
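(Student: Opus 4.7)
The plan is to reduce $\fa(S,5l)$ to the factorization space of an incomplete matrix via the variable gadget, and then exploit the rigidity of an $I_l$ submatrix. Let $\widetilde{S}$ denote the incomplete matrix obtained from the displayed matrix of the lemma by replacing each blue $1$ with an independent variable $x_i\in[0,1]$. Iterating Theorem~\ref{pr3} once per blue entry yields the strong equivalence $\fa(S,5l)\sim\fa(\widetilde{S},l)$.

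For Part~(1), the only variables in the concrete matrix $S$ are $y_1,\ldots,y_l,L$, all located in the first column, so deleting the variable rows and the variable column leaves a $5l\times 5l$ matrix $S'$. Because distinct gadget applications add disjoint sets of new rows and columns, $S'$ decomposes as the direct sum of $l$ blocks $B_1,\ldots,B_l$ of size $5\times 5$. Each $B_i$ is built from one blue row extended by four ones, the $4\times 4$ cyclic pattern in the gadget's lower-right corner, and an isolated $1$ at the top-left corner. An explicit decomposition into one rank-$1$ summand per row gives $\rp(B_i)\le 5$. For the matching lower bound, the first column of $B_i$ has a single nonzero entry at its top, so every nonnegative rank-$1$ summand $uv^T$ either has $v_1=0$ or has $u$ supported on the first row. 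Summands of the former type restrict to a nonnegative decomposition of the $4\times 5$ bottom submatrix of $B_i$, whose support forms an $8$-cycle in the bipartite incidence graph and therefore requires at least four rank-$1$ pieces; at least one summand of the latter type is needed to cover the top-left entry. Hence $\rp(B_i)=5$, and summing over the blocks yields $\rp(S')=5l$.

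For Part~(2), we analyze $\fa(\widetilde{S},l)$ directly. The submatrix of $\widetilde{S}$ on rows $l+2,\ldots,2l+1$ and columns $2,\ldots,l+1$ equals $I_l$, whose nonnegative rank is exactly $l$. Any nonnegative $l$-factorization of $\widetilde{S}$ therefore restricts to a nonnegative $l$-factorization of $I_l$, and standard rigidity of the identity matrix forces, after reindexing and rescaling the rank-$1$ summands, $u^{(k)}_{l+1+k}=v^{(k)}_{k+1}=1$ with $u^{(k)}_{l+1+j}=v^{(k)}_{j+1}=0$ for $j\neq k$. The remaining entries then drop out column by column: the column-$(k+1)$ equations give $u^{(k)}_j=\delta_{jk}$ for $j\in[1,l]$ and $u^{(k)}_{l+1}=s_k$, while the column-$1$ equations at rows $k$, $l+1+k$, and $l+1$ successively force $v^{(k)}_1=y_k$, the consistency constraint $x_k=y_k$ (within the gadget's range $[0,1]$), and $L=\sum_j s_jy_j$. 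Thus $L$ is uniquely forced to equal $s_1y_1+\ldots+s_ly_l$, the whole factorization is determined by $(y_1,\ldots,y_l)$, and every entry of every rank-$1$ summand is either a constant from $\F_+$ or equal to a single $y_k$.

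The main obstacle is the lower bound $\rp(B_i)\ge 5$ in Part~(1): $B_i$ has conventional rank~$4$ over $\Rc$ because of a cyclic linear relation among its four gadget rows, so the bound $\rp\ge\operatorname{rank}$ only yields~$4$, and the extra unit of nonnegative rank must be harvested from the combinatorial support structure via the case analysis above.
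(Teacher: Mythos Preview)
Your argument is correct and follows the same route as the paper's (very terse) proof: peel off the $l$ variable gadgets via Theorem~\ref{pr3} to pass from $\fa(S,5l)$ to $\fa(\widetilde S,l)$, establish Part~(1) from the block structure of the gadget residues, and read off Part~(2) from the rigidity of the embedded $I_l$. You supply substantially more detail than the paper does---in particular the explicit $5\times 5$ block $B_i$ and the support/biclique argument giving $\rp(B_i)=5$, which the paper compresses into the phrase ``because of the bottom blocks''.

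Two small remarks. First, each invocation of Theorem~\ref{pr3}(1) needs the hypothesis $r\le\rp(\overline{A})$; you do not check this explicitly, but it holds at every stage because $\overline{A}$ always contains the remaining $B_i$'s together with an extra isolated identity entry from the top $I_l$ block, giving exactly the required bound. Second, your closing claim that every entry of each rank-one summand is ``either a constant from $\F_+$ or equal to a single $y_k$'' is slightly too strong: the $(l{+}1,1)$ entry of the $k$-th summand is $s_ky_k$. This is harmless, since the lemma only asserts that the remaining entries are fixed rational functions of the $y_k$'s, which $s_ky_k$ certainly is.
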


\begin{proof}
We use Theorem~\ref{pr3}(2) and replace the blue entries with variables; this will decrease the nonegative rank by $4l$ and save the nonnegative $5l$-factorization space up to a rational projection. Now, the condition (1) is valid because of the bottom blocks of the matrix, and (2) can be checked straightforwardly.
\end{proof}

\begin{lem}\label{lemprod}
Let $v,u_1,u_2$ be variables ranging in $[0,1]$. Let $P=P(u_1u_2=v)$ be the matrix obtained from
$$\left(\begin{array}{ccc}
v&u_1&\textcolor{blue}{1}\\
u_2&1&1\\
\textcolor{blue}{1}&1&1
\end{array}\right)
$$
by applying the variable gadgets with parameter $1$ to the blue entries. If we remove all the rows and columns of $S$ which contain variables, we get a matrix with nonnegative rank $9$. For any fixed assignment of values to variables $u_1,u_2$, there is a unique nonnegative $9$-factorization of $S$ corresponding to the value of $v$ equal to $u_1u_2$. Some entries of matrices in this factorization are equal to $u_1,u_2$, and the rest are rational functions of these variables fixed in advance. 
\end{lem}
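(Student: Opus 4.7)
The plan is to follow the blueprint of Lemma~\ref{lemsum}. First, I invoke Theorem~\ref{pr3}(2) once for each of the two blue entries, replacing them with independent variables $w_1,w_2$. Because both blue entries equal $1$ and the gadget parameter is $1$, the admissible range for each $w_i$ is $[\max\{0,1-1\},1]=[0,1]$. Each application drops the nonnegative rank by $4$ and preserves the factorization space up to a rational projection, so $\fa(P,9)$ is strongly equivalent to $\fa(P_0,1)$, where
$$P_0=\begin{pmatrix} v & u_1 & w_1 \\ u_2 & 1 & 1 \\ w_2 & 1 & 1 \end{pmatrix}$$
is the incomplete matrix with all five variables ranging in $[0,1]$.

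Next, I analyse the rank-one completions of $P_0$. In any such completion, rows $2$ and $3$ must be proportional, with the common ratio read off from columns $2$ and $3$ being $1/1=1$, so the two rows coincide and $w_2=u_2$. The symmetric argument on columns $2$ and $3$ gives $w_1=u_1$, and rank-one of the resulting matrix then forces $v=u_1u_2$. Consequently, for every fixed $(u_1,u_2)\in[0,1]^{2}$, the set $\fa(P_0,1)$ is a single point whose unique rank-one summand is $(u_1,1,1)^{T}(u_2,1,1)$, and its coordinates are polynomials in $u_1,u_2$. Transporting this back through the two gadget reductions of step one supplies the asserted unique $9$-factorization of $P$, with the remaining coordinates being rational functions of $u_1,u_2$.

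Finally, I verify the rank assertion. The variables of $P$ are precisely $v,u_1,u_2$, so the deletion of every row and column meeting a variable removes rows $1,2$ and columns $1,2$ of the $3\times 3$ base; what survives is the entry $(3,3)=1$ together with the two $4\times 4$ cyclic gadget blocks and the $M$-labeled entries they place in row $3$ and column $3$. Since the two blue entries lie in disjoint rows and disjoint columns, the two gadget copies have essentially disjoint supports, and a direct inspection of the resulting block structure shows that each cyclic block contributes $4$ to the nonnegative rank, the surviving $(3,3)=1$ contributes $1$, and no rank-one summand can be shared across these pieces, giving the total $1+4+4=9$. The only subtlety is in step one, where the two gadget applications must not create crosstalk between $w_1$ and $w_2$; this is guaranteed here because the two blue entries occupy different rows and different columns of the base matrix, so the two applications of Theorem~\ref{pr3}(2) can be performed one after the other on disjoint index sets.
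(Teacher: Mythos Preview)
Your proof is correct and follows exactly the approach the paper intends: invoke Theorem~\ref{pr3} once per blue entry to pass from $\fa(P,9)$ to $\fa(P_0,1)$, then check directly that the rank-one condition on $P_0$ forces $w_1=u_1$, $w_2=u_2$, $v=u_1u_2$. The paper's own proof is simply ``Similarly to the previous lemma,'' so your write-up is in fact a faithful expansion of what the author had in mind.
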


\begin{proof}
Similarly to the previous lemma.
\end{proof}

Now we are ready to prove the Universality theorem. For any polynomial $f\in\F[x_1,\ldots,x_n]$, we can write the equation $f=0$ as $s_1\mu_1+\ldots+s_l\mu_l=s_{l+1}\mu_{l+1}+\ldots+s_r\mu_r$, where $s_i\in\F_+$ and $\mu_i$ are products of the form $\alpha_{i1}\ldots \alpha_{im_i}$ with $\alpha_{ij}\in\{x_1,\ldots,x_n\}$. We introduce new variables $L$ and $v_{ij}$, where $i\in\{1,\ldots,r\}$ and $j\in\{2,\ldots,i_m\}$; we also set $v_{i1}=\alpha_{i1}$.

Now we are going to construct an incomplete matrix $\H$ depending on the $x_i$'s, $v_{ij}$'s and $L$. As in the Lemmas~\ref{lemsum} and~\ref{lemprod}, we assume that $x_i$'s and $v_{ij}$'s are ranging in $[0,1]$, and also $L$ is ranging in $[0,s_1+\ldots+s_r]$. We define $\H$ as the block-diagonal matrix containing the following matrices as diagonal blocks: $S(L=s_1v_{1m_1}+\ldots+s_lv_{lm_l})$, $S(L=s_{l+1}v_{l+1,m_{l+1}}+\ldots+s_rv_{rm_r})$, and all the matrices $P(v_{ij}=v_{i,j-1}\alpha_{ij})$ for all $i$ and all $j\in\{2,\ldots,m_i\}$.

If we remove from $\H$ all the rows and columns in which the variables occur, we get the matrix of nonnegative rank $\rho=5r+9(m_1+\ldots+m_r-r)$. By Lemmas~\ref{lemsum} and~\ref{lemprod}, the set $\fa(\H,\rho)$ is strongly equivalent to the intersection of the cube $[0,1]^n$ and the zero locus of $f$. It remains to apply Theorem~\ref{thr1} to complete the proof of the Universality theorem.

\end{document}